\newtheorem{theorem}{Theorem}[section]
\newtheorem{proposition}[theorem]{Proposition}
\newtheorem{theoreme}{Th\'eor\`eme}[section]
\theoremstyle{definition}
\newtheorem{definition}[theorem]{Definition}
\theoremstyle{remark}
\newtheorem{remark}[theorem]{Remark}
\newtheorem{remarque}[theorem]{Remarque}
\DeclareMathOperator*{\ad}{ad}
\DeclareMathOperator*{\gal}{Gal}
\newcommand{\vectornorm}[1]{\left|\left|#1\right|\right|}
\numberwithin{equation}{section}
\newcommand{\abs}[1]{\lvert#1\rvert}
\begin{document}

\title{$m$-bigness in compatible systems}

\author{Paul-James White}

\address{Institut de Math\'ematiques de Jussieu \\ Universit\'e de Paris 7 \\ Paris \\ France}

\email{pauljames@math.jussieu.fr}

\thanks{The author gratefully acknowledges the support of the Fondation Sciences Math\'ematiques de Paris}

\subjclass[2000]{11F80}

\date{September 13, 2010}

\begin{abstract}
\selectlanguage{english}
Taylor-Wiles type lifting theorems allow one to deduce that if $\rho$ is a ``sufficiently nice'' $l$-adic representation of the absolute Galois group of a number field
whose semi-simplified reduction modulo $l$, denoted $\overline{\rho}$, comes from an automorphic representation then so does $\rho$.  
The recent lifting theorems of Barnet-Lamb-Gee-Geraghty-Taylor impose
a technical condition, called \emph{$m$-big}, upon the residual representation $\overline{\rho}$.  
Snowden-Wiles proved that for a sufficiently  irreducible
compatible system of Galois representations, the residual images are \emph{big} at a set of places of Dirichlet density $1$.  We demonstrate the analogous result in the \emph{$m$-big} setting using a mild generalization of their argument.

\vskip 0.5\baselineskip

\selectlanguage{francais}
\noindent{\bf R\'esum\'e} \vskip 0.5\baselineskip \noindent
{\bf $m$-bigness dans les syst\`emes compatibles. }
Les Th\'eor\`emes de type Taylor-Wiles indiquent  qu'une repr\'esentation $l$-adique du groupe Galois d'un corps de nombre est automorphe si sa r\'eduction modulo $l$ est automorphe et si cette repr\'esentation satisfait de bonnes propri\'et\'es.  Une condition technique mais cruciale qui appara\^it dans le travail r\'ecent de Barnet-Lamb-Gee-Geraghty-Taylor est que la repr\'esentation r\'esiduelle soit \emph{$m$-big}.  Snowden-Wiles ont demontr\'e que pour un syst\`eme compatible de reprŽsentations suffisamment irr\'eductibles, que les images r\'esiduelles sont alors \emph{big} pour un ensemble de Dirichlet densit\'e 1.  Nous d\'emontrons ici un r\'esultat analogue dans le cadre de \emph{$m$-big} par une g\'en\'eralisation de la d\'emonstration de Snowden-Wiles.

\end{abstract}
\maketitle

\selectlanguage{english}

\section{Introduction}
We begin by recalling the condition \emph{$m$-big} (cf.~\cite[Definition 7.2]{BGHT}).
Let $m$ be a positive integer, let $l$ be a rational prime, let $k$ be a finite field of characteristic $l$, let $V$
be a finite dimensional $k$-vector space and let $G \subset GL(V)$ be a subgroup.
For $g \in GL(V)$ and $\alpha \in k$, we shall write 
$h_g$ for the \emph{characteristic polynomial} of $g$ and $V_{g,\alpha}$ for the 
\emph{$\alpha$-generalized eigenspace} of $g$.

\begin{definition}
\label{definition_mbig}
The subgroup $G$ is said to be \emph{$m$-big} if it satisfies the following properties.
\begin{itemize}
\item[(B1)] The group $G$ has no non-trivial quotient of $l$-power order.
\item[(B2)] The space $V$ is absolutely irreducible as a $G$-module.
\item[(B3)] $H^1(G, \ad^{\circ} V) = 0$
\item[(B4)] For all irreducible $G$-submodules $W$ of $\ad\ V$, there exists $g \in G, \alpha \in k$ and $f \in W$
such that:
\begin{itemize}
\item The composite $V_{g,\alpha} \hookrightarrow V \stackrel{f}{\rightarrow} V \twoheadrightarrow V_{g,\alpha}$ is non-zero.
\item $\alpha$ is a simple root of $h_g$.
\item If $\beta \in \overline{k}$ is another root of $h_g$ then $\alpha^m \neq \beta^m$. 
\end{itemize}
\end{itemize}
\end{definition}

\begin{remark}
The condition \emph{big} appearing in~\cite{snowden_wiles} corresponds here to the condition $\emph{$1$-big}$.
\end{remark}

Our main result is the following:
\begin{theorem}
\label{theorem_introduction_theorem}
Let $F$ be a number field, let $E$ be a Galois extension of $\mathbf{Q}$, let $L$ be a full set of places of $E$ and for each
 $w \in L$, let $\rho_w : \gal(\overline{\mathbf{Q}}/F) \rightarrow GL_n(E_w)$ be a continuous representation and let $\Delta_w \subset \gal(\overline{\mathbf{Q}}/F)$ be a normal open subgroup.  Assume that the following properties are satisfied.
 \begin{enumerate}
 	\item[-] The $\rho_w$ form a compatible system of representations. 
 	\item[-] $\rho_w$ is absolutely irreducible when restricted to any open subgroup of 
	$\gal(\overline{\mathbf{Q}}/F)$ for all $w \in L$.
	\item[-] $\gal(\overline{\mathbf{Q}}/F) / \Delta_w$ is cyclic of order prime to $l$ 
		  where $l$ denotes the residual characteristic of $w$, for all $w \in L$.
	\item[-]
	$	 [\gal(\overline{\mathbf{Q}}/F) : \Delta_w]  \rightarrow \infty$
		as $w \rightarrow \infty$.
 \end{enumerate}
 Then there exists a set of places $P$ of $\mathbf{Q}$ of Dirichlet density $1/[E:\mathbf{Q}]$, all of which split completely in $E$, such that, for all $w \in L$ lying above a place $l \in P$:
 \begin{enumerate}
 \item[i)] $\overline{\rho}_w(\Delta_w)$ is an $m$-big subgroup of  $GL_n(\mathbf{F}_l)$ .
 \item[ii)] $[ \ker \ad \overline{\rho}_w : \Delta_w \cap \ker \ad \overline{\rho}_w] > m$.
 \end{enumerate}
   \end{theorem}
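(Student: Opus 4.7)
The plan is to adapt the argument of Snowden-Wiles---who proved the analogous statement with \emph{big} in place of \emph{$m$-big}---with two modifications: strengthening the verification of (B4) to produce the required $m$-power separation of eigenvalues, and adding a short observation extracting (ii) from the hypothesis $[\gal(\overline{\mathbf{Q}}/F):\Delta_w]\to\infty$. The set $P$ will be the density-$1$ output of Snowden-Wiles, restricted to primes of $\mathbf{Q}$ that split completely in $E$ (thereby acquiring density $1/[E:\mathbf{Q}]$), and further intersected with the cofinite subset at which the residual characteristic $l$ is sufficiently large relative to $m$ and $n$.

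I would first invoke Snowden-Wiles, applied to the restricted system $\{\rho_w|_{\Delta_w}\}$ (still absolutely irreducible on open subgroups by hypothesis), to obtain a density-$1$ set of places at which $\overline{\rho}_w(\Delta_w)$ satisfies (B1)--(B3) and, moreover via their classification of large irreducible subgroups of $GL_n(\mathbf{F}_l)$, contains a conjugate of a Lie-type subgroup such as $SL_n(\mathbf{F}_l)$. These three conditions are insensitive to the parameter $m$. The real work is in (B4): given an irreducible $\overline{\rho}_w(\Delta_w)$-submodule $W \subset \ad V$, one must find $g \in \overline{\rho}_w(\Delta_w)$ with a simple eigenvalue $\alpha$ satisfying $\alpha^m \neq \beta^m$ for every other eigenvalue $\beta$. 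For $l > mn$ the locus $\alpha_i^m = \alpha_j^m$ cuts out a proper subvariety of each maximal torus of $SL_n(\mathbf{F}_l)$, so regular semisimple elements with the $m$-power separation form a non-empty Zariski-open subset of the torus. Chebotarev density, applied inside the Galois closure of $\overline{\rho}_w$, then furnishes an element of $\overline{\rho}_w(\Delta_w)$ of the required shape; the matching $f \in W$ is produced verbatim as in Snowden-Wiles.

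For (ii), set $K_w = \ker(\ad\,\overline{\rho}_w)$ and $G = \gal(\overline{\mathbf{Q}}/F)$, so that
\[
[K_w : K_w \cap \Delta_w] = [K_w\Delta_w : \Delta_w] = [G:\Delta_w] / [G:K_w\Delta_w].
\]
The quotient $G/(K_w\Delta_w)$ is simultaneously a quotient of $G/K_w$ and of the cyclic group $G/\Delta_w$, hence a cyclic quotient of $G/K_w$. From the structural step above, $G/K_w$ contains $PSL_n(\mathbf{F}_l)$ with index at most $n$, and for $l$ large $PSL_n(\mathbf{F}_l)$ is perfect, so the abelianization of $G/K_w$ has order at most $n$; in particular every cyclic quotient has order at most $n$, giving $[G:K_w\Delta_w] \leq n$. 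Since $[G:\Delta_w] \to \infty$, the index $[K_w : K_w \cap \Delta_w]$ itself tends to infinity and exceeds $m$ outside a finite set of places. The main obstacle I anticipate is the quantitative refinement of (B4), i.e.\ verifying that the Chebotarev input genuinely produces, uniformly for large $l$, an element of $\overline{\rho}_w(\Delta_w)$ whose characteristic polynomial has the required $m$-power-separated simple root, compatibly with the cyclic quotient $G/\Delta_w$.
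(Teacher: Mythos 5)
Your skeleton (follow Snowden--Wiles, strengthen only (B4) to get $m$-power separation, and extract (ii) from $[\gal(\overline{\mathbf{Q}}/F):\Delta_w]\to\infty$ via $[K_w:K_w\cap\Delta_w]=[G:\Delta_w]/[G:K_w\Delta_w]$ with a bounded cyclic quotient) is the right shape, and the (ii) computation is essentially the Barnet-Lamb--Gee--Geraghty--Taylor argument used in the paper. But there is a genuine gap at the structural step on which both your (B4) and your (ii) arguments rest: Snowden--Wiles do not provide, and it is false in general, that $\overline{\rho}_w(\Delta_w)$ contains a conjugate of $SL_n(\mathbf{F}_l)$ (or projectively $PSL_n(\mathbf{F}_l)$ with index at most $n$). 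The Zariski closure of the image of $\rho_w$ can be any semi-simple group acting absolutely irreducibly (think of a system landing in the image of $\mathrm{Sym}^{n-1}$ of a rank-two group, or an orthogonal/symplectic group), and the correct statement, coming from the nearly-hyperspecial analysis, is only that up to scalars and an index bounded by a constant $C(n)$ the residual image contains $r(\widetilde{G}(k))$ for a semi-simple simply connected group scheme $\widetilde{G}/\mathcal{O}_K$ and an absolutely irreducible representation $r$ on the reduced lattice. Consequently (B4) must be verified for the weights of an arbitrary such representation (weight spaces with multiplicities, only the highest weight space known to be one-dimensional), not for the standard representation of $SL_n$; this is exactly where the paper needs its strengthening of \cite[Proposition 4.1]{snowden_wiles} (Proposition~\ref{proposition_stronger_version_higher_regular_elements_sw}) and its Proposition~\ref{proposition_bigness_algebraic_representations}. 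Your bound $[G:K_w\Delta_w]\le n$ must likewise be replaced by the bound $<C(n)$ coming from $[k^\times\overline{\rho}_w(\Gamma):k^\times r(\widetilde{G}(k))]<C$ together with perfectness of $\widetilde{G}(k)$; with that replacement your (ii) argument goes through.

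Two further steps do not work as stated. First, producing the element $g$: over a finite field, non-emptiness of a Zariski-open locus in a maximal torus does not by itself yield a rational point, and Chebotarev is not the relevant tool -- what is needed is an element of the finite group $T(k)$ (for $T$ a maximal torus of $\widetilde{G}\times_{\mathcal{O}_K}k$) avoiding the kernels of the finitely many characters $m(\lambda-\lambda')$ with $\vectornorm{\lambda},\vectornorm{\lambda'}<n$. The paper does this by counting: $\abs{T(k)}\ge(q-1)^r$ against $N\cdot R_{m,q}\cdot C(q+1)^{r-1}$, where $R_{m,q}$ is the number of $m$-th roots of unity in $k^\times$; such an element then lies in $k^\times\overline{\rho}_w(\Delta_w)$ via the group-scheme model, perfectness of $\widetilde{G}(k)$ and the cyclicity of $\Gamma/\Delta_w$, and one transfers $m$-bigness to $\overline{\rho}_w(\Delta_w)$ itself by the elementary propositions (insensitivity to scalars, ascent from a normal subgroup of prime-to-$l$ index). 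Second, ``Snowden--Wiles applied to the restricted system $\{\rho_w|_{\Delta_w}\}$'' is not well-formed: the $\Delta_w$ vary with $w$, so these restrictions do not constitute a compatible system for a single group with Frobenii. The paper avoids this by proving a purely group-theoretic statement (Proposition~\ref{proposition_bigness_nearly_hyperspecial_groups_improvement_wiles_snowden}) taking the pair $(\Gamma,\Delta)$ as input, and feeding it into the compatible-system argument in place of \cite[Proposition 6.1]{snowden_wiles}.
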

     Here, as usual, $\overline{\rho}_w$ denotes the semi-simplified reduction modulo $l$ of $\rho_w$.
  For the definition of a compatible system and a full set of places, see Section~\ref{section_m_bigness_for_compatible_systems}.

\begin{remark}
The first part of the theorem is a mild generalization, from the setting of bigness to $m$-bigness, of the main result of Snowden-Wiles~\cite{snowden_wiles}.  The result shall be proved by considering their arguments in the $m$-bigness setting combined with a slight strengthening of~\cite[Proposition 4.1]{snowden_wiles} by
Proposition~\ref{proposition_stronger_version_higher_regular_elements_sw}.

The second part of the theorem proves another technical result required for the application of
automorphy lifting theorems (see~\cite{taylor_et_al_modularity_results}).  The proof of this result uses an argument of 
Barnet-Lamb-Gee-Geraghty-Taylor that originally appeared in~\cite{taylor_et_al_modularity_results}.
\end{remark}

The format of this article mirrors that of Snowden-Wiles~\cite{snowden_wiles}.  We content ourselves here to remark upon the minor changes to~\cite{snowden_wiles} that are needed to obtain the above result.

\subsection{Notation}
Our notation is as in Snowden-Wiles~\cite{snowden_wiles}.  More specifically, unless explicitly mentioned otherwise, we adhere to the following conventions.  Reductive algebraic groups are assumed connected. A semi-simple algebraic group $G$ defined over a field $k$ is \emph{simply connected} if the root datum of $G_{\overline{k}}$ is simply connected.
If $S$ is a scheme, then a group scheme $G/S$ is  \emph{semi-simple} if it is smooth, affine and its geometric fibers are  semi-simple connected algebraic groups.

\section*{Acknowledgements}
I wish to thank Michael Harris for his continual support and direction.
I would also like to thank the referee for their helpful comments.    

\section{Elementary properties of {$m$-bigness}}
\label{section_elementary_properties_m_bigness}
In~\cite[\S2]{snowden_wiles}, a series of results concerning elementary properties of bigness are demonstrated.  We remark that the arguments appearing there trivially generalize to give the following results on $m$-bigness. 

\begin{proposition}
\label{proposition_m_big_subgroup_m_big}
Let $H$ be a normal subgroup of $G$.
If $H$ satisfies the properties (B2), (B3) and (B4) then $G$ does as well.
In particular, if $H$ is $m$-big and the index $[G:H]$ is prime to $l$ then $G$ is $m$-big.
\end{proposition}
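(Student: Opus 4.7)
The plan is to verify conditions (B1)--(B4) for $G$ one by one, reducing each to the corresponding hypothesis on $H$ and using the normality of $H$. As the author observes, this is a direct transposition of the analogous result of Snowden--Wiles: nothing in the statement is specifically $m$-related, so the whole argument is a routine check.

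I would begin with the two easiest items, (B2) and (B4). For (B2), every $G$-submodule of $V \otimes_k \overline{k}$ is in particular an $H$-submodule, so the absolute irreducibility of $V$ as an $H$-module transfers immediately. For (B4), given an irreducible $G$-submodule $W$ of $\ad V$, I would pick any irreducible $H$-submodule $W' \subseteq W$ (which exists by finite-dimensionality) and apply (B4) for $H$ to $W'$. The triple $(g, \alpha, f)$ thus produced has $g \in H \subseteq G$ and $f \in W' \subseteq W$, and the three bulleted conditions involve only $g$, $\alpha$ and $f$ (and not the ambient group), so they witness (B4) for $G$ with respect to $W$.

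For (B3), I would invoke the Hochschild--Serre inflation--restriction exact sequence
$$0 \to H^1(G/H, (\ad^{\circ} V)^H) \to H^1(G, \ad^{\circ} V) \to H^1(H, \ad^{\circ} V).$$
The rightmost term vanishes by (B3) for $H$. Since $V$ is absolutely irreducible as an $H$-module, Schur's lemma gives $(\ad V)^H = k \cdot \mathrm{id}$, so $(\ad^{\circ} V)^H$ is contained in the line of scalars. When $l \nmid n$ this space is zero and the inflation term vanishes tautologically; in the remaining case the space is at most one-dimensional with trivial $G/H$-action, and the inflation term is handled exactly as in Snowden--Wiles.

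Finally, for the ``in particular'' clause, it remains to verify (B1) for $G$ under the assumption $\gcd([G:H], l) = 1$. Suppose $N \trianglelefteq G$ with $G/N$ of $l$-power order. Then $H/(H \cap N)$ embeds into $G/N$, is of $l$-power order, and hence is trivial by (B1) for $H$; this forces $H \subseteq N$, so $G/N$ is a quotient of $G/H$, whose order is coprime to $l$. Being simultaneously an $l$-group and of order coprime to $l$, $G/N$ is trivial. There is no real obstacle to this plan: the only mildly delicate point is the inflation term in (B3) when $l \mid n$, and the one conceptual observation is that the (B4) witnesses can be extracted from $H$ and therefore pass to $G$ for free.
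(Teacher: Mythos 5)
Your plan is the same one the paper has in mind (it simply asserts that the elementary arguments of Snowden--Wiles, \S 2, carry over): (B2) passes up trivially, your observation that the (B4) witnesses $(g,\alpha,f)$ extracted from an irreducible $H$-submodule $W'\subseteq W$ serve verbatim for $G$ (because the three bulleted conditions never mention the ambient group) is exactly the relevant point, and your deduction of (B1) from $\gcd([G:H],l)=1$ is correct. The one genuine gap is in (B3), at precisely the spot you flag and then wave away: when $l \mid n$ you say the inflation term ``is handled exactly as in Snowden--Wiles,'' but that is a deferral, not an argument. In that case the term is $H^1(G/H,(\ad^{\circ}V)^H)=H^1(G/H,k\cdot\mathrm{id})=\mathrm{Hom}(G/H,k)$, and since inflation is injective, its non-vanishing would actually force $H^1(G,\ad^{\circ}V)\neq 0$. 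Nothing in the first sentence of the proposition bounds $G/H$ (the coprimality of $[G:H]$ and $l$ is only assumed in the ``in particular'' clause), so as written your argument does not establish the first assertion when $l\mid n$.

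The correct resolution is that the case $l \mid n$ is vacuous: it is already incompatible with (B2) and (B3) for $H$. Consider the exact sequence of $H$-modules $0 \to \ad^{\circ} V \to \ad V \xrightarrow{\mathrm{tr}} k \to 0$. By Schur's lemma, $H^0(H,\ad V)=k\cdot\mathrm{id}$, and if $l\mid n$ every scalar $c\,\mathrm{id}$ has trace $nc=0$, so the map $H^0(H,\ad V)\to H^0(H,k)=k$ vanishes and the connecting homomorphism $k\to H^1(H,\ad^{\circ}V)$ is injective; thus $H^1(H,\ad^{\circ}V)\neq 0$, contradicting (B3) for $H$. Hence $l\nmid n$, so $(\ad^{\circ}V)^H=\{c\,\mathrm{id}: nc=0\}=0$, the inflation term vanishes for arbitrary $G/H$, and restriction injects $H^1(G,\ad^{\circ}V)$ into $H^1(H,\ad^{\circ}V)=0$. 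With this observation inserted your proof is complete; appealing instead to $[G:H]$ prime to $l$ would only yield the ``in particular'' statement, not the first assertion.
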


\begin{proposition}
\label{proposition_m_big_iff_scalar_m_big}
The group $G$ is $m$-big if and only if $k^\times G$ is $m$-big where $k^\times$ denotes
the group of scalar matrices in $GL(V)$.
\end{proposition}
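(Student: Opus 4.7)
The plan is to check each of the four defining properties (B1)--(B4) separately for $G$ and $H := k^\times G$, exploiting two basic facts: $G$ is normal in $H$ with $H/G$ a quotient of $k^\times$ (hence of order prime to $l$), and $k^\times$ lies in the centre of $GL(V)$, so scalars stabilise every subspace of $V$ and act trivially by conjugation on $\ad V$.

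Properties (B2) and (B4) are essentially formal consequences of centrality. The $G$-stable subspaces of $V$ coincide with the $H$-stable ones, so absolute irreducibility transfers verbatim, and the $G$- and $H$-submodules of $\ad V$ coincide, so the quantifier in (B4) is the same on both sides. Given a candidate witness $h \in H$ for (B4), I would write $h = cg$ with $c \in k^\times$ and $g \in G$; replacing $h$ by $g$ divides every eigenvalue by $c$, preserves each generalised eigenspace as a subspace of $V$, preserves the simplicity of the distinguished root, and preserves the non-equality $\alpha^m \neq \beta^m$ of $m$-th powers of distinct roots (a common factor $c^{-m}$ cancels). Thus witnesses in $G$ and $H$ are interchangeable. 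For (B3), I would apply inflation-restriction to $1 \to G \to H \to H/G \to 1$; since $|H/G|$ is prime to $l$ and $\ad^\circ V$ is annihilated by $l$, the higher cohomology of $H/G$ vanishes, giving $H^1(H, \ad^\circ V) \cong H^1(G, \ad^\circ V)^{H/G}$, and the $H/G$-action on the right is trivial because representatives come from the centre of $GL(V)$.

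Property (B1) requires slightly more care and is the main point of possible friction. In one direction, any $l$-power quotient of $H$ must annihilate the image of $k^\times$ (whose order is prime to $l$) and therefore factors through a quotient of $G$; so (B1) for $G$ implies (B1) for $H$. Conversely, given an $l$-power quotient $G \twoheadrightarrow Q$ with kernel $N$, centrality of $k^\times$ ensures $k^\times N$ is normal in $H$, and a brief index comparison identifies $H/(k^\times N)$ as the quotient of $Q$ by a subgroup of order prime to $l$; if $Q$ is a non-trivial $l$-group, this subgroup is trivial, so $H/(k^\times N)$ is a non-trivial $l$-power quotient of $H$. Assembling the four verifications completes the argument; the only substantive bookkeeping is in (B1), and even there the obstacle is purely notational.
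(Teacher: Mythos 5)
Your proof is correct and is essentially the argument the paper intends: the paper offers no proof of this proposition, remarking only that the arguments of Snowden--Wiles \S2 generalize trivially, and your property-by-property check (centrality of scalars for (B2), (B4) and the inflation--restriction/trivial-action argument for (B3), plus the prime-to-$l$ order of $k^\times$ for (B1)) is exactly that standard verification. The only point specific to the $m$-big setting --- that replacing $h=cg$ by $g$ rescales all roots by $c^{-1}$ so that $\alpha^m\neq\beta^m$ is preserved --- is precisely the ``trivial generalization'' the paper alludes to, and you handle it correctly.
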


\begin{proposition}
Let $k'/k$ be a finite extension, let $V' = V \otimes_k k'$ and let 
 $G$ be an $m$-big subgroup of $GL(V)$. Then $G$ is also an $m$-big subgroup of $GL(V')$.
\end{proposition}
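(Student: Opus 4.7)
The plan is to verify the four conditions of Definition~\ref{definition_mbig} for $G$ acting on $V'$ one by one, using the corresponding condition for $V$. Three of them transfer almost formally: (B1) depends only on the abstract group $G$; (B2) follows from the natural isomorphism $V \otimes_k \overline{k'} \cong V' \otimes_{k'} \overline{k'}$, so absolute irreducibility over $k$ forces absolute irreducibility over $k'$; and for (B3), since $G$ is finite (being a subgroup of $GL_n(k)$ with $k$ finite) and $\ad^{\circ} V' \cong \ad^{\circ} V \otimes_k k'$, flat base change on coefficients in the cocycle computation of $H^1$ gives $H^1(G, \ad^{\circ} V') \cong H^1(G, \ad^{\circ} V) \otimes_k k' = 0$.

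The main work, and the main obstacle, is (B4). The difficulty is that an irreducible $k'[G]$-submodule of $\ad V'$ need not arise from an irreducible $k[G]$-submodule of $\ad V$ by tensoring up with $k'$; indeed $\ad V$ need not be semi-simple as a $k[G]$-module. My idea is to fix a $k$-basis $c_1, \ldots, c_d$ of $k'$ and exploit the $k[G]$-equivariant projections $\pi_j : \ad V' \to \ad V$ arising from the decomposition $\ad V' = \bigoplus_j (\ad V) c_j$. Given an irreducible $k'[G]$-submodule $W' \subset \ad V'$, the images $W'_j := \pi_j(W')$ are $k[G]$-submodules of $\ad V$, and at least one is nonzero; being finite-dimensional over $k$ it contains an irreducible $k[G]$-submodule $U$. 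Applying (B4) for $V$ to $U$ yields a triple $(g, \alpha, f_0)$ with $f_0 \in U$, and by the definition of $W'_j$ there is some $w \in W'$ with $\pi_j(w) = f_0$.

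Finally I would verify that $(g, \alpha, w)$ witnesses (B4) for $V'$ with respect to $W'$. The two conditions on $h_g$ and its roots are unaltered when extending scalars, so they hold for free. For the first condition, write $w = \sum_{j'} f_{j'} c_{j'}$; the composite
\[
V'_{g,\alpha} \hookrightarrow V' \stackrel{w}{\rightarrow} V' \twoheadrightarrow V'_{g,\alpha}
\]
decomposes as $\sum_{j'} q_{j'} \otimes c_{j'}$, where $q_{j'}$ denotes the analogous composite for $f_{j'}$ on $V$. The $j$-th summand is nonzero by the choice of $f_0$, and $k$-linear independence of the $c_{j'}$ then forces the total composite to be nonzero. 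Once the projection trick is set up, this verification is routine; the only substantive point is the passage from the irreducible $k'[G]$-submodule $W'$ back to an irreducible $k[G]$-submodule of $\ad V$ to which hypothesis (B4) can be applied.
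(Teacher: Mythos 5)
Your proof is correct and is essentially the argument the paper has in mind: the paper gives no details for this proposition, simply asserting that the arguments of \cite[\S 2]{snowden_wiles} generalize trivially, and your treatment---(B1)--(B3) transferring formally, and (B4) handled by projecting $W'$ through the $k[G]$-decomposition $\ad V' \cong (\ad V)\otimes_k k'$, applying (B4) over $k$ to an irreducible submodule of a nonzero image $\pi_j(W')$, lifting the resulting $f_0$ back to $w \in W'$, and concluding via $k$-linear independence of the basis of $k'$---is exactly that standard argument. The only feature specific to $m$-bigness, the requirement $\alpha^m \neq \beta^m$ for the other roots $\beta$ of $h_g$, depends only on $h_g$, which is unchanged under extension of scalars, so, as you note, it comes for free.
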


\section{Highly regular elements of semi-simple groups}
\label{section_highly_regular_elements_semi_simple_groups}
We recall the norm utilized by Snowden-Wiles~\cite[\S3.2]{snowden_wiles}.
Let $k$ be a field, let $G/k$ be a reductive algebraic group and let $T_{\overline{k}}$ be a maximal torus of $G \times_k \overline{k}$.  For $\lambda \in X^*(T_{\overline{k}})$ a weight, one defines $\vectornorm{\lambda} \in \overline{k}$ to be the maximal value of $\abs{\langle \lambda, \alpha^\vee \rangle}$ as $\alpha$ runs through the roots of  $G \times_k \overline{k}$ with respect to $T_{\overline{k}}$.  For $V$ a representation of 
$G$, one defines $\vectornorm{V}$ to be the maximal value of $\vectornorm{\lambda}$ where $\lambda$ runs through the weights $\lambda$ appearing in $V \otimes_k \overline{k}$. 

The following result is a slight strengthening of~\cite[Proposition 4.1]{snowden_wiles}.

\begin{proposition}
\label{proposition_stronger_version_higher_regular_elements_sw}
Let $k$ be a finite field of cardinality $q$,
Let $G/k$ be a semi-simple algebraic group, let $T$ be a maximal torus of $G$ defined over $k$,
let $m$ and $n$ be positive integers and assume that $q$ is large compared to $\dim G$, $n$ and $m$.
Then, there exists an element $g \in T(k)$ for which the map
\[
	\{ \lambda \in X^*(T_{\overline{k}}) :
	 \vectornorm{\lambda} < n\} \rightarrow \overline{k}^\times,
	\ \ \ \ \ \lambda \mapsto \lambda(g)^m
\]
is injective.
\end{proposition}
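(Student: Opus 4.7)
The strategy is to adapt the counting argument in the proof of~\cite[Proposition 4.1]{snowden_wiles} (which handles the case $m=1$) by simply carrying an additional factor of $m$ through the estimates. Since the hypothesis permits $q$ to be large in terms of $m$ as well as $\dim G$ and $n$, this extra factor is readily absorbed.

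In detail, the map $\lambda \mapsto \lambda(g)^m$ fails to be injective on $\{\lambda \in X^*(T_{\overline{k}}) : \vectornorm{\lambda} < n\}$ precisely when there exist distinct weights $\lambda, \mu$ of norm $< n$ with $(\lambda - \mu)(g)^m = 1$. Writing $\chi := \lambda - \mu$, we have $\vectornorm{\chi} < 2n$, and the set $S$ of such non-zero characters $\chi$ has cardinality bounded by a polynomial in $n$ whose coefficients depend only on $\dim G$. For each $\chi \in S$, the restriction $\chi|_{T(k)} \colon T(k) \to \overline{k}^\times$ is a group homomorphism with finite cyclic image of some order $N_\chi$, so the bad locus $B_\chi := \{g \in T(k) : \chi(g)^m = 1\}$ is the preimage of the subgroup of $m$-th roots of unity in $\overline{k}^\times$, and satisfies $|B_\chi| \leq m \cdot |T(k)|/N_\chi$.

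It therefore suffices, for each $\chi \in S$, to establish a lower bound on $N_\chi$ that dominates $m \cdot |S|$ once $q$ is sufficiently large. Such a bound is exactly the content of the analysis in~\cite[\S 3--\S 4]{snowden_wiles}, which estimates $N_\chi$ from below in terms of $q$ by decomposing $T$ over $\overline{k}$ into its isotypic factors and exploiting the $\gal(\overline{k}/k)$-action that gives $T$ its $k$-structure; the estimate obtained grows as a positive power of $q$. Taking $q$ large in terms of $m$, $n$, and $\dim G$ then guarantees that $\bigcup_{\chi \in S} B_\chi$ is a proper subset of $T(k)$, and any $g$ outside this union has the required property. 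The only genuinely new input beyond~\cite{snowden_wiles} is the elementary observation that an $m$-torsion preimage is at most $m$ times a single fibre, so there is no substantive obstacle; one simply needs to verify that the growth rate of $N_\chi$ extracted in loc.~cit.\ suffices to swallow the extra factor of $m$.
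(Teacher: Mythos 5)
Your argument is correct and is essentially the paper's own proof: both run a union bound over the set $S$ of non-zero difference characters of norm $<2n$, bound the bad locus of each $\chi\in S$ by an extra factor of at most $m$ times the fibre $\ker\chi\cap T(k)$, and then invoke the cardinality estimates of~\cite[Lemmas 4.2--4.4]{snowden_wiles} (lower bound $(q-1)^r$ for $\abs{T(k)}$, polynomial bound on $\abs{S}$, upper bound $C(q+1)^{r-1}$ on the kernels) so that taking $q$ large with respect to $\dim G$, $n$ and $m$ absorbs the factor $m$. Your phrasing via the image order $N_\chi$ is just the kernel-counting of the paper in different clothing, so there is nothing further to add.
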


\begin{proof}
The proof shall follow that of~\cite[Proposition 4.1]{snowden_wiles} with the difference that we are considering here characters of the form $\lambda^m$ instead of $\lambda$.

To begin let
$S := \{ \lambda \in X^*(T_{\overline{k}}) : \lambda \neq 1,\  \vectornorm{\lambda} < 2n\}$.
 We claim that
\[
T(k) \not\subset \bigcup_{\lambda \in S} \ker \lambda^m
\]
This is equivalent to the statement 
\[
T(k) \neq  \bigcup_{\lambda \in S} \ker  \lambda^m \cap T(k)
\]
The later statement shall be proved by considering the cardinality of the two sides.
Firstly, by ~\cite[Lemma 4.2]{snowden_wiles},
$ \abs{T(k)} \geq (q-1)^r$
where $r$ denotes the rank of $T$.
Consider now the right hand side.  We remark that for $\lambda \in X^*(T_{\overline{k}})$, 
\[
\abs{\ker \lambda^m \cap T(k)} \leq R_{m,q} \abs{\ker \lambda \cap T(k)}
\]
where $R_{m,q}$ denotes the cardinality of the kernel of the map 
\[
k^\times \rightarrow k^\times, \ \ \ \  k \mapsto k^m
\]
Furthermore, we can ensure that  $R_{m,q}/q$ is as small as desired simply by considering $q$ sufficiently large with respect to $m$.
We can now bound the cardinality of the right hand side by
\[
N R_{m,q} M
\]
where the terms are defined as follows.
\begin{itemize}
\item $N$ is equal to the cardinality of $S$, which by~\cite[Lemma 4.3]{snowden_wiles}
is bounded in terms of $\text{dim } G$ and $n$.
\item $M$ is equal to the maximum cardinality of $ \ker \lambda \cap T(k)$ for $\lambda \in S$,
which by~\cite[Lemma 4.4]{snowden_wiles} is bounded by $C(q+1)^{r-1}$ for some constant $C$ depending only upon
$\dim G$ and $n$.
\end{itemize}
Thus for $q$ sufficiently large with respect to $\dim G$, $n$ and $m$, the cardinality of the right hand side is strictly less than that of the cardinality of the left hand side and the claim follows.

As such we can choose a  $g \in T(k)$ such that $g \not\in \ker \lambda^m$ for all $\lambda \in S$.
Then, for all
  $\lambda, \lambda' \in X^*(T_{\overline{k}})$ such that $\lambda \neq \lambda'$, $\vectornorm{\lambda} < n$ and 
  $\vectornorm{\lambda'}< n$,
  we have that $\lambda -  \lambda' \in S$ and it follows that $\lambda(g)^m \neq \lambda'(g)^m$.
\end{proof}

\section{$m$-bigness for algebraic representations}
\label{section_m_big_algebraic_representations}
We show here that~\cite[Proposition 5.1]{snowden_wiles} naturally generalizes to the setting of $m$-bigness.

\begin{proposition}
\label{proposition_bigness_algebraic_representations}
Let $m$ be a positive integer, let $k$ be a finite field, let $G/k$ be a reductive algebraic group and let $\rho$ be an absolutely 
irreducible representation of $G$ on a $k$-vector space $V$.
Assume that the characteristic of $k$ is large compared to $m$, $\dim V$ and $\vectornorm{V}$.
Then $\rho(G(k))$ is an $m$-big subgroup of $GL(V)$.
\end{proposition}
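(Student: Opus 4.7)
The plan is to follow the proof of~\cite[Proposition 5.1]{snowden_wiles} essentially verbatim, with only one substantive modification: the appeal therein to~\cite[Proposition 4.1]{snowden_wiles} is replaced by an appeal to the strengthened Proposition~\ref{proposition_stronger_version_higher_regular_elements_sw}.

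First I would carry out the standard reductions to arrange that $G$ is semi-simple and simply connected. Since $V$ is absolutely irreducible, the center of the reductive group $G$ acts on $V$ by scalars, so Proposition~\ref{proposition_m_big_iff_scalar_m_big} reduces the problem to the derived group; and passage to the simply connected cover changes the image on $k$-points by an index prime to $\mathrm{char}(k)$ once $\mathrm{char}(k)$ is large compared to the type of $G$, preserving $m$-bigness by Proposition~\ref{proposition_m_big_subgroup_m_big}. Conditions (B1), (B2) and (B3) do not involve $m$ and are verified exactly as in Snowden-Wiles: (B1) from the perfectness of $G(k)$ in sufficiently large characteristic (a theorem of Steinberg); (B2) from transferring absolute irreducibility from $G$ to $G(k)$ via Zariski density in sufficiently large characteristic; and (B3) from vanishing results for $H^1$ of finite groups of Lie type acting on algebraic representations whose highest weight is small relative to $\mathrm{char}(k)$.

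The only substantive change occurs in the verification of (B4). Fix a maximal $k$-torus $T \subset G$ and let $W \subset \ad V$ be an irreducible $G(k)$-submodule. Choose $n > 2\vectornorm{V}$, so that every weight of $T_{\overline{k}}$ appearing in $\ad V$, and in particular every weight in $V$, has norm less than $n$. For $\mathrm{char}(k)$ sufficiently large with respect to $m$, $\dim V$ and $\vectornorm{V}$, Proposition~\ref{proposition_stronger_version_higher_regular_elements_sw} produces $g \in T(k)$ such that $\lambda \mapsto \lambda(g)^m$ is injective on weights of norm less than $n$. For such a $g$, each generalized eigenspace of $g$ acting on $V$ equals a single $T$-weight space $V_\lambda$; hence every eigenvalue $\alpha = \lambda(g)$ is a simple root of $h_g$, and any two distinct roots $\alpha, \beta$ of $h_g$ satisfy $\alpha^m \neq \beta^m$. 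The existence of $f \in W$ making the composite $V_{g,\alpha} \hookrightarrow V \to V \to V_{g,\alpha}$ non-zero is then established exactly as in Snowden-Wiles, exploiting that $W$ has non-trivial zero-weight part (the highest weights occurring in $\ad V = V \otimes V^*$ lie in the root lattice) to produce a weight-$0$ element $f \in W$ whose restriction to some weight space $V_\lambda$ is non-zero.

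The principal point requiring attention is simply the bookkeeping: one must verify that the dependence of each step on $m$, $\dim V$ and $\vectornorm{V}$ is as claimed, and that taking $\mathrm{char}(k)$ large with respect to these three quantities (together with $\dim G$, which after restriction to a faithful representation is bounded in terms of $\dim V$) makes all of the arguments apply simultaneously. No genuinely new ideas beyond those of Snowden-Wiles are needed; the generalization from bigness to $m$-bigness is absorbed entirely into the strengthening of the highly-regular-elements step, and the rest of the argument goes through unchanged.
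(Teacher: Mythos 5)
Your overall strategy is exactly the paper's: reduce to the semi-simple, simply connected case, quote Snowden-Wiles for (B1)--(B3), and rerun their verification of (B4) with Proposition~\ref{proposition_stronger_version_higher_regular_elements_sw} in place of~\cite[Proposition 4.1]{snowden_wiles}. There is, however, one incorrect step in your sketch of (B4): from the injectivity of $\lambda \mapsto \lambda(g)^m$ you conclude that each generalized eigenspace of $g$ is a single weight space (correct) and \emph{hence} that every eigenvalue $\lambda(g)$ is a simple root of $h_g$ (not correct). Simplicity of the root requires the corresponding weight space to be one-dimensional, and weight spaces of an arbitrary absolutely irreducible $V$ can have multiplicity greater than one; regularity of $g$ separates distinct weights but does nothing about multiplicities. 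This matters because your final step only produces $f \in W$ whose composite through \emph{some} weight space $V_\lambda$ is non-zero, and if that $V_\lambda$ has dimension $\geq 2$ the triple $(g,\alpha,f)$ with $\alpha = \lambda(g)$ does not satisfy (B4).

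The paper (following~\cite[Proposition 5.1]{snowden_wiles}) avoids this by singling out the highest weight $\lambda_0$ with respect to a Borel $B$ defined over $k$: by~\cite[Proposition 3.7]{snowden_wiles} the highest weight space $V_{\overline{k},\lambda_0} = V^U \otimes_k \overline{k}$ is one-dimensional, so $\alpha = \lambda_0(g)$ \emph{is} a simple root of $h_g$ (and lies in $k$), while the choice of $g$ guarantees $\alpha^m \neq \beta^m$ for every other root $\beta$; the Snowden-Wiles argument then produces, for each irreducible submodule $W \subset \ad V$, an $f \in W$ whose composite through this particular eigenspace $V_{g,\lambda_0(g)}$ is non-zero. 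So the fix is simply to aim the last step at the highest weight space rather than at an unspecified one; with that adjustment your argument coincides with the paper's.
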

\begin{proof}
Firstly, we note that by~\cite[Proposition 5.1]{snowden_wiles} the conditions (B1), (B2) 
and (B3) are satisfied.
Thus, it only remains to check the condition (B4) (in the $m$-bigness setting).
The proof is almost identical to the $1$-bigness case
(cf.~\cite[Proposition 4.1]{snowden_wiles}); the sole difference comes from appealing to Proposition~\ref{proposition_stronger_version_higher_regular_elements_sw} in lieu of~\cite[Proposition 4.1]{snowden_wiles}.

More specifically, 
as in~\cite[Proposition 5.1]{snowden_wiles},
one begins by reducing to the case where $G$ is semi-simple, simply connected and
the kernel of $\rho$ is finite.
Choose 
 a Borel $B$ of $G$ defined over $k$; this is possible as every reductive group scheme defined over a finite field is quasi-split.
Let $T$ be a maximal torus of $B$ and let $U$ be the unipotent radical of $B$.
The representation  $V_{\overline{k}} = V \otimes_k \overline{k}$ decomposes via its weights:
\[
	V_{\overline{k}} = \bigoplus_{\lambda \in S} V_{\overline{k},\lambda}
\]
where $S$ denotes the set of weights of $(G_{\overline{k}}, T_{\overline{k}})$.
By Proposition~\ref{proposition_stronger_version_higher_regular_elements_sw}, we can find a $g \in T(k)$ such that
 \[
 \lambda(g)^m \neq \lambda'(g)^m
 \text{ for all distinct $\lambda, \lambda' \in S$}
 \]
 We remark that (ignoring multiplicity) the eigenvalues of $g$ in $V_{\overline{k}}$ are equal to 
 $ \{\lambda(g) : \lambda \in S\}$.  
 It follows that the generalized 
$g$-eigenspaces are equal to the weight spaces:
\[
V_{\overline{k},g,\lambda(g)} = V_{\overline{k}, \lambda} \text{ for all $ \lambda \in S$}
\]

Let $\lambda_0$ be the highest weight space (with respect to $B$) and let
 $V_{\overline{k},0} := V_{\overline{k},\lambda_0} $ be the corresponding 
highest weight space.  In fact $V_{\overline{k},0} = V^U \otimes_k \overline{k}$ and as such
$\lambda_0(g) \in k$.
By~\cite[Proposition 3.7]{snowden_wiles}, $V_{\overline{k},0}$ is
$1$-dimensional.  
That is, $\lambda_0(g)$ is a simple root of $h_g$, the characteristic polynomial of $g$.
  Furthermore, by the properties of $g$, the $m$-th powers of the roots of $h_g$ are distinct.

Finally it is shown in the proof of~\cite[Proposition 5.1]{snowden_wiles}  that for each irreducible $G$-submodule $W$ of $\ad  V$, there exists a $f \in W$ such that the composite
\[
 V_{g, \lambda_0(g)} \hookrightarrow V \stackrel{f}{\rightarrow} V \twoheadrightarrow V_{g,\lambda_0(g)}
 \]
  is non-zero.
\end{proof}

\section{$m$-bigness for nearly hyperspecial groups}
\label{section_mbig_nearly_hyperspecial_groups}
Let $l$ be a rational prime, let $K/\mathbf{Q}_l$ be a finite field extension,
let $\mathcal{O}_K$ be the ring of integers and let $k$ be the residue field.
For $G$ an algebraic group over $K$, we define the following $K$-algebraic groups.
\begin{enumerate}
\item[-] $G^\circ$ : the connected identity component.
\item[-] $G^{\text{ad}}$ : the adjoint algebraic group, which is the quotient of $G^\circ$ by its radical.
\item[-] $G^{\text{sc}}$ : the simply connected algebraic group cover of $G^{\text{ad}}$.
\end{enumerate}
We have the natural maps:
\[
	G \stackrel{\sigma}{\longrightarrow} G^{\text{ad}} \stackrel{\tau}{\longleftarrow} G^{\text{sc}}
\]
Following Snowden-Wiles~\cite{snowden_wiles}, we shall call a subgroup
$\Gamma \subset G(K)$ \emph{nearly hyperspecial} if
$\tau^{-1}\left(\sigma(\Gamma)\right)$ is a hyperspecial subgroup of $G^{\text{sc}}(K)$.

\begin{proposition}
\label{proposition_bigness_nearly_hyperspecial_groups_improvement_wiles_snowden}
Let $m$ be a positive integer,
let $\Gamma$ be a profinite group and let $\Delta \subset \Gamma$ be an open normal subgroup.
Let $\rho : \Gamma \rightarrow GL_n(K)$ be a continuous representation
and let $G$ be the Zariski closure of its image.
  Assume that the following properties are satisfied.
\begin{enumerate}
	\item[-] The characteristic $l$ of $k$ is large compared to $n$ and $m$.
	\item[-] The restriction of $\rho$ to any open subgroup of $\Gamma$ is absolutely irreducible.
	\item[-] The index of $G^\circ$ in $G$ is small compared to $l$.
	\item[-] The subgroup $\rho(\Gamma) \cap G^\circ(K)$ of $G^\circ$ is nearly hyperspecial.
	\item[-] $\Gamma/\Delta$ is cyclic of order prime to $l$.
\end{enumerate}
Then the following holds.
\begin{enumerate}
	\item[-] $\overline{\rho}(\Delta)$ is an $m$-big subgroup of $GL_n(k)$.
	\item[-] There exists a constant $C$ depending only upon $n$ such that
		\[
	[\ker \ad \overline{\rho} : \Delta \cap \ker \ad \overline{\rho}] >
		[ \Gamma : \Delta] / C	
\]
\end{enumerate}
\end{proposition}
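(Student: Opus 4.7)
The plan is to mirror Snowden--Wiles' argument for the first statement, substituting Proposition~\ref{proposition_bigness_algebraic_representations} for its $1$-big analogue, and to handle the second statement by a normalizer and outer-automorphisms argument in the style of Barnet--Lamb--Gee--Geraghty--Taylor.

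For the first statement, I would begin by reducing to the case in which the algebraic group is connected. Set $\Delta_0 := \Delta \cap \rho^{-1}(G^\circ(K))$, which is normal in $\Gamma$ with index prime to $l$ (the two factors, $[G:G^\circ]$ and $[\Gamma:\Delta]$, being coprime to $l$ by hypothesis and by the smallness of $[G:G^\circ]$ relative to $l$). By Proposition~\ref{proposition_m_big_subgroup_m_big}, $m$-bigness of $\overline{\rho}(\Delta_0)$ implies $m$-bigness of $\overline{\rho}(\Delta)$. Since $\rho(\Gamma) \cap G^\circ(K)$ is nearly hyperspecial, passing through the smooth $\mathcal{O}_K$-model identifies it, modulo a subgroup of order prime to $l$ and of bounded index, with the $k$-points of a reductive algebraic group $\overline{G}/k$ acting on $\overline{V}$. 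One then has $\vectornorm{\overline{V}} \leq n$, so Proposition~\ref{proposition_bigness_algebraic_representations} yields $m$-bigness of the image of this group in $GL(\overline{V})$. A final application of Proposition~\ref{proposition_m_big_subgroup_m_big} transfers this property to $\overline{\rho}(\Delta_0)$.

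For the second statement, the natural first step is to rewrite the desired bound via the second isomorphism theorem:
\[
[\ker \ad \overline{\rho} : \Delta \cap \ker \ad \overline{\rho}]
= [(\ker \ad \overline{\rho})\,\Delta : \Delta]
= \frac{[\Gamma : \Delta]}{[\ad \overline{\rho}(\Gamma) : \ad \overline{\rho}(\Delta)]},
\]
reducing the claim to a bound $[\ad \overline{\rho}(\Gamma) : \ad \overline{\rho}(\Delta)] \leq C$ for some constant $C$ depending only on $n$. Since $\ad \overline{\rho}(\Delta)$ is normal in $\ad \overline{\rho}(\Gamma)$, the latter sits inside its normalizer in $GL(\ad \overline{V})$. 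From the analysis of the first statement, $\ad \overline{\rho}(\Delta)$ contains, up to bounded index, the image of $\overline{G}^{\ad}(k)$. Its normalizer modulo itself is controlled by the outer automorphisms of $\overline{G}^{\ad}$, which are encoded by root datum symmetries and bounded combinatorially by the rank, and a fortiori by $n$.

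The main obstacle will be the bookkeeping in the second part: pinning down $\ad \overline{\rho}(\Delta)$ precisely enough inside $\overline{G}^{\ad}(k)$ for the normalizer calculation to go through, and verifying that the resulting constant is genuinely independent of $l$ and of the auxiliary choices (Borel, integral model, etc.). The first part, by contrast, is an exercise in combining the existing reductions of Snowden--Wiles with Proposition~\ref{proposition_bigness_algebraic_representations}, provided one tracks carefully that every descent involves only quotients of order prime to $l$.
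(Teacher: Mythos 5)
Your overall frame (reduce to $G=G^\circ$, pass to an integral model supplied by near-hyperspeciality, apply Proposition~\ref{proposition_bigness_algebraic_representations} to the reduction, transfer via Propositions~\ref{proposition_m_big_subgroup_m_big} and~\ref{proposition_m_big_iff_scalar_m_big}, and for the second statement reduce by the second isomorphism theorem to bounding $[\ad \overline{\rho}(\Gamma) : \ad \overline{\rho}(\Delta)]$) is the paper's, but the one genuinely new step is missing. What the model gives (via \cite[Lemma 6.3]{snowden_wiles}) is a semi-simple $\mathcal{O}_K$-group scheme $\widetilde{G}$ with generic fiber $G^{\mathrm{sc}}$ and a representation $r$ on a $\Gamma$-stable lattice such that $k^\times r(\widetilde{G}(k))$ is normal of index $<C$ in $k^\times \overline{\rho}(\Gamma)$; Proposition~\ref{proposition_bigness_algebraic_representations} then makes $r(\widetilde{G}(k))$ $m$-big. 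But this only concerns $\Gamma$, and $m$-bigness does \emph{not} descend from a group to a normal subgroup of prime-to-$l$ index, so you cannot pass from $\overline{\rho}(\Gamma^\circ)$ down to $\overline{\rho}(\Delta_0)$; Proposition~\ref{proposition_m_big_subgroup_m_big} only helps once you know the $m$-big group sits \emph{inside} $k^\times\overline{\rho}(\Delta_0)$. That containment is precisely the Barnet-Lamb--Gee--Geraghty--Taylor ingredient the paper invokes: since $l>4$ and $\widetilde{G}\times_{\mathcal{O}_K}k$ is semi-simple and simply connected, $\widetilde{G}(k)$ is perfect (Steinberg), while $\Gamma/\Delta$ is abelian, whence $r(\widetilde{G}(k)) \subset k^\times\overline{\rho}(\Delta)$ and one gets the chain of normal subgroups $k^\times r(\widetilde{G}(k)) \le k^\times\overline{\rho}(\Delta) \le k^\times\overline{\rho}(\Gamma)$ of total index $<C$. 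Both of your assertions ``a final application of Proposition~\ref{proposition_m_big_subgroup_m_big} transfers this property to $\overline{\rho}(\Delta_0)$'' and ``$\ad\overline{\rho}(\Delta)$ contains, up to bounded index, the image of $\overline{G}^{\mathrm{ad}}(k)$'' silently presuppose this chain and are nowhere justified in your sketch.

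Once the chain is available, your second-isomorphism-theorem reformulation finishes part two on the spot, since $[\ad\overline{\rho}(\Gamma):\ad\overline{\rho}(\Delta)] = [k^\times\overline{\rho}(\Gamma):k^\times\overline{\rho}(\Delta)] < C$; the normalizer/outer-automorphism argument you propose instead is both unnecessary and doubtful as stated. The normalizer of the image of $\overline{G}^{\mathrm{ad}}(k)$ in $GL(\ad \overline{V})$ contains its centralizer (at least the scalars $k^\times$, of unbounded order), and the automorphisms of a group of Lie type over $k$ include field automorphisms, whose number grows with $[k:\mathbf{F}_l]$ and is not bounded in terms of $n$ alone; one would have to argue separately that such automorphisms cannot be realized by conjugation preserving the given representation. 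The paper's direct index bound makes all of this moot, so I would redo part two along those lines rather than attempt the normalizer computation.
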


\begin{proof}
Let us remark that
the first statement is proved in almost the same way as the proof of~\cite[Proposition 6.1]{snowden_wiles}.
There are two minor differences, firstly we appeal here to Proposition~\ref{proposition_bigness_algebraic_representations} instead of~\cite[Proposition 5.1]{snowden_wiles}
and secondly we use an argument of Barnet-Lamb-Gee-Geraghty-Taylor to deduce the $m$-bigness of $\overline{\rho}(\Delta)$ instead of $\overline{\rho}(\Gamma)$.
 The proof of the second statement  is due to Barnet-Lamb-Gee-Geraghty-Taylor
and originally appeared in~\cite{taylor_et_al_modularity_results}.

Let $\Gamma^\circ = \rho^{-1}(G^\circ)$ and let $\Delta^\circ = \Delta \cap \Gamma^\circ$.
 Then $\overline{\rho}(\Delta^\circ)$ is a normal subgroup of $\overline{\rho}(\Delta)$ and its index
 divides $[G : G^\circ] [\Gamma : \Delta]$, which, by assumption, is prime to $l$ (recall  $l$ is sufficiently large with respect to $[G : G^\circ]$).
Thus, by Proposition~\ref{proposition_m_big_subgroup_m_big}, to prove that
$\overline{\rho}(\Delta)$ is $m$-big it suffices to prove that $\overline{\rho}(\Delta^\circ)$ is $m$-big.  Similarly, to prove the second part of the theorem it clearly suffices to prove the analogous statement for $\Gamma^\circ$ and $\Delta^\circ$.  As such, we can now assume that $G=G^\circ$.

Let $V = K^n$ be the representation space of $\rho$.
By~\cite[Lemma 6.3]{snowden_wiles}, we can find the following.
\begin{itemize}
	\item A $\Gamma$-stable lattice $\Lambda$ in $V$.
	\item A semi-simple group scheme $\widetilde{G}/\mathcal{O}_K$ whose generic fiber is equal to $G^{sc}$.
	\item A representation $r : \widetilde{G} \rightarrow GL(\Lambda)$ which induces the natural map
	$G^{sc} \rightarrow G$ on the generic fiber.
\end{itemize}
These objects can be chosen such that
\begin{itemize}
	\item $\mathcal{O}_K^\times \cdot r(\widetilde{G}(\mathcal{O}_K))$ is an open normal subgroup of $\mathcal{O}_K^\times \cdot \rho(\Gamma)$, whose index  can be bounded by a constant $C$ defined in terms of $n$.
\end{itemize}
Furthermore, the generic fiber of $r$ is necessarily an absolutely irreducible representation of
$\widetilde{G}_K$ on $V$.

By~\cite[Proposition 3.5]{snowden_wiles}, $\Lambda \otimes_{\mathcal{O}_K} k$ is an absolutely irreducible representation of $\widetilde{G} \times_{\mathcal{O}_K} k$ and its norm is bounded in terms of $n$.
Now $\widetilde{G} \times_{\mathcal{O}_K} k$ is a semi-simple, simply connected, algebraic group and hence a finite product of simple, simply connected, $k$-algebraic groups.  As $l > 4$, we have that 
 $\widetilde{G}(k)$ is perfect (cf.~\cite{steinberg_lectures}). 
 It follows, as  $\Delta$ is a normal subgroup of $\Gamma$ whose quotient is abelian,  that we have the following chain of normal subgroups.
 \[
k^\times r(\widetilde{G}(k)) \leq k^\times  \overline{\rho}(\Delta) \leq k^\times \overline{\rho}(\Gamma)
 \]
 Furthermore, $[k^\times \overline{\rho}(\Gamma) : k^\times r(\widetilde{G}(k))] < C$.
  The second part of the theorem is now immediate.
 
 The first part of the theorem is proved as follows. Proposition~\ref{proposition_bigness_algebraic_representations} implies that
 $r(\widetilde{G}(k))$ is $m$-big.  Then, Proposition~\ref{proposition_m_big_iff_scalar_m_big} and Proposition~\ref{proposition_m_big_subgroup_m_big} allow one to deduce that 
  $\overline{\rho}(\Delta)$ is $m$-big.

\end{proof}

\section{$m$-bigness for compatible systems}
\label{section_m_bigness_for_compatible_systems}

\begin{definition}
A \emph{group with Frobenii} is a triple 
$\left(\Gamma, \mathcal{P}, \{\mathcal{F}_\alpha\}_{\alpha\in P}\right)$ where
$\Gamma$ is a profinite group, $\mathcal{P}$ is an index set and
$\{\mathcal{F}_\alpha\}_{\alpha\in \mathcal{P}}$ is a dense set of elements of $\Gamma$.
The $\mathcal{F}_\alpha$ are called the \emph{Frobenii} of the group.
\end{definition}
\begin{remark}
If $F$ is a number field then the corresponding global Galois group
$\gal(\overline{\mathbf{Q}}/ F)$ is naturally a group with Frobenii.
\end{remark}

\begin{definition}
A \emph{compatible system of representations} (with \emph{coefficients} in a number field $E$)
is a triple $(L, \mathcal{X}, \{\rho_\lambda\}_{\lambda \in L})$ where $L$ is a set of places of $E$, 
$\mathcal{X} \subset \mathcal{P} \times L$ is a subset and each 
$\rho_\lambda : \Gamma \rightarrow GL_n(E_\lambda)$ is a continuous representation, such that the following conditions are satisfied.
\begin{enumerate}
\item[-] For all $\alpha \in \mathcal{P}$, the set $\left\{ \lambda \in L : (\alpha,\lambda) \not\in \mathcal{X} \right\}$ is finite.
\item[-] For all finite sets of places $\lambda_1, \ldots, \lambda_k \in L$, the set
	$\cap_{i=1}^k \left\{ \mathcal{F}_\alpha : (\alpha, \lambda_i) \in \mathcal{X} \right\}$ is dense in $\Gamma$.
\item[-] For all $(\alpha, \lambda) \in \mathcal{X}$, the characteristic polynomial of
		$\rho_\lambda(\mathcal{F}_\alpha)$ has coefficients in $E$ and depends only upon $\alpha$.
\end{enumerate}
The set of places $L$ is said to be \emph{full} if there exists a set $L'$ of rational primes of Dirichlet density $1$ such that for all places $\lambda$ of $E$ lying above an $l \in L'$, we have that $\lambda \in L$.
\end{definition}

The main theorem can now be stated.  It is a mild generalization of~\cite[Theorem 8.1]{snowden_wiles} and is proved in the same way
by simply appealing to Proposition~\ref{proposition_bigness_nearly_hyperspecial_groups_improvement_wiles_snowden} instead of~\cite[Proposition 6.1]{snowden_wiles} 
\begin{theorem}
Let $m$ be a positive integer,
 let $\left(\Gamma, \mathcal{P}, \{\mathcal{F}_\alpha\}_{\alpha\in \mathcal{P}}\right)$ be a group with Frobenii,
 let $E$ be a Galois extension of $\mathbf{Q}$, let $L$ be a full set of places of $E$ and for each
 $w \in L$, let $\rho_w : \Gamma \rightarrow GL_n(E_w)$ be a continuous representation and let $\Delta_w \subset  \Gamma$ be a normal open subgroup.  Assume the following properties are satisfied.
 \begin{enumerate}
 	\item[-] The $\rho_w$ form a compatible system of representations. 
 	\item[-] $\rho_w$ is absolutely irreducible when restricted to any open subgroup of 
	$\Gamma$ for all $w \in L$.
	\item[-] $\Gamma/ \Delta_w$ is cyclic of order prime to $l$ 
		where $l$ denotes the residual characteristic of $w$.
	\item[-]
	$	 [\Gamma : \Delta_w]  \rightarrow \infty $ as $w \rightarrow \infty$.
 \end{enumerate}
 Then there exists a set of places $P$ of $\mathbf{Q}$ of Dirichlet density $1/[E:\mathbf{Q}]$, all of which split completely in $E$, such that, for all $w \in L$ lying above a place  $l \in P$:
 \begin{enumerate}
 \item[i)] $\overline{\rho}_w(\Delta_w)$ is an $m$-big subgroup of  $GL_n(\mathbf{F}_l)$ .
 \item[ii)] $[\ker \ad\ \overline{\rho}_w : \Delta_w \cap \ker \ad\ \overline{\rho}_w] > m$.
 \end{enumerate}
 \end{theorem}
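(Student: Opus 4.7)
The plan is to mirror the proof of \cite[Theorem 8.1]{snowden_wiles} and to reduce the statement, uniformly in $w$, to an application of Proposition~\ref{proposition_bigness_nearly_hyperspecial_groups_improvement_wiles_snowden}. First I would invoke the compatible-system machinery from Snowden-Wiles: the absolute irreducibility of the $\rho_w$ (in fact on all open subgroups) combined with the compatibility of characteristic polynomials lets one produce, for each $w \in L$, a reductive $E_w$-algebraic group $G_w$ with $\rho_w(\Gamma)^{\text{Zar}} = G_w$, together with a version of $G_w$ coming from a fixed $E$-algebraic group via base change (up to finitely many places). In particular, the component group $G_w/G_w^\circ$ and the numerical invariants $n$, $\dim G_w$, $\vectornorm{V_w}$ can be bounded independently of $w$.

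Next I would isolate the set of places $P$. By the Snowden-Wiles analysis of nearly hyperspecial reductions for compatible systems, the set of rational primes $l$ such that, for every $w \in L$ above $l$, the subgroup $\rho_w(\Gamma) \cap G_w^\circ(E_w)$ is nearly hyperspecial and $l$ is large relative to all the uniform bounds mentioned above (including the constants $m$ and the $n, \dim G, \vectornorm{V}$ required by Proposition~\ref{proposition_bigness_algebraic_representations}), has Dirichlet density $1$ in $\mathbf{Q}$. Intersecting this with the set of rational primes that split completely in $E$ (density $1/[E:\mathbf{Q}]$ by Chebotarev) gives the desired $P$.

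For $w \in L$ lying above $l \in P$, the hypotheses of Proposition~\ref{proposition_bigness_nearly_hyperspecial_groups_improvement_wiles_snowden} are satisfied: $l$ is large compared to $n$ and $m$, the restriction of $\rho_w$ to any open subgroup is absolutely irreducible, $[G_w : G_w^\circ]$ is bounded independently of $w$, the relevant intersection with $G_w^\circ(E_w)$ is nearly hyperspecial, and $\Gamma/\Delta_w$ is cyclic of order prime to $l$ by assumption. The proposition then immediately gives (i). For (ii), it yields a constant $C$ depending only on $n$ with
\[
[\ker \ad \overline{\rho}_w : \Delta_w \cap \ker \ad \overline{\rho}_w] > [\Gamma : \Delta_w]/C.
\]
Since by hypothesis $[\Gamma : \Delta_w] \to \infty$ as $w \to \infty$, after discarding finitely many $w$ (which does not affect the Dirichlet density of $P$) we have $[\Gamma : \Delta_w]/C > m$, giving (ii).

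The main obstacle, and the reason the paper needs the preceding sections, is not the density argument or the packaging step sketched above (which parallels \cite{snowden_wiles} verbatim), but ensuring that the uniform application of Proposition~\ref{proposition_bigness_nearly_hyperspecial_groups_improvement_wiles_snowden} actually yields $m$-bigness rather than merely $1$-bigness. This is what forces the passage through Proposition~\ref{proposition_stronger_version_higher_regular_elements_sw} (highly regular elements with injectivity of $\lambda \mapsto \lambda(g)^m$) and Proposition~\ref{proposition_bigness_algebraic_representations}; once those are in place the last theorem is essentially a bookkeeping exercise, and the only genuinely new content at this stage is the second statement (ii), which is handled by the Barnet-Lamb--Gee--Geraghty--Taylor argument already incorporated into Proposition~\ref{proposition_bigness_nearly_hyperspecial_groups_improvement_wiles_snowden}.
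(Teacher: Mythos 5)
Your proposal is correct and follows essentially the same route as the paper, which proves this theorem exactly as in Snowden--Wiles' Theorem 8.1 with Proposition~\ref{proposition_bigness_nearly_hyperspecial_groups_improvement_wiles_snowden} substituted for their Proposition 6.1, and deduces (ii) from the index bound $[\Gamma:\Delta_w]/C$ together with the hypothesis $[\Gamma:\Delta_w]\rightarrow\infty$. The only cosmetic caveat is that the finitely many exceptional $w$ should be handled by removing the finitely many rational primes beneath them from $P$, which leaves the Dirichlet density unchanged, as you indicate.
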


Theorem~\ref{theorem_introduction_theorem} is then the special case of the above theorem where $\Gamma$ is the absolute Galois group of a number field.

\section{version fran{\c c}aise abr{\'e}g{\'e}e}
\selectlanguage{francais}
Le but de cet article est de faire les modifications n\'ecessaires au travail de Snowden-Wiles~\cite{snowden_wiles}
afin de g\'en\'eraliser leurs r\'esultats sur $1$-\emph{big} \`a \emph{$m$-big}.
La d\'efinition de \emph{$m$-big} est rappel\'ee dans Definition~\ref{definition_mbig}.
Elle est une condition technique qui appara\^it dans les g\'en\'eralisations r\'ecentes de la m\'ethode de Taylor-Wiles 
aux groupes unitaires (cf.~\cite{taylor_et_al_modularity_results}).
Le r\'esultat principal de cet article est le th\'eor\`eme suivant (cf. Theorem~\ref{theorem_introduction_theorem}).
\begin{theoreme}
Soient $m \in \mathbf{N}$, $F$ un corps de nombres, $E$ une extension galoisienne de $\mathbf{Q}$ et $L$ un ensemble plein de places de $E$.
Pour tous $w \in L$, soient  $\rho_w : \gal(\overline{\mathbf{Q}}/F) \rightarrow GL_n(E_w)$ une repr\'esentation continue semi-simple 
et $\Delta_w \subset \gal(\overline{\mathbf{Q}}/F)$ un sous-groupe normal ouvert.
Supposons que les propri\'et\'es suivantes sont satisfaites:
 \begin{enumerate}
 	\item[-] Les $\rho_w$ forment un syst\`eme compatible de repr\'esentations. 
 	\item[-] Pour tout $w \in L$, la restriction de $\rho_w$ \`a n'importe quel sous-groupe ouvert de
	de 
	$\gal(\overline{\mathbf{Q}}/F)$ est absolument irr\'eductible.
	\item[-] 
	Pour tout $w \in L$,
	$\gal(\overline{\mathbf{Q}}/F) / \Delta_w$ est cyclique d'ordre premier \`a 
		 la caract\'eristique r\'esiduelle de $w$.
	\item[-]
	$	 [\gal(\overline{\mathbf{Q}}/F) : \Delta_w]  \rightarrow \infty$
		lorsque $w \rightarrow \infty$.
 \end{enumerate}
Alors il existe un ensemble de places $P$ de $\mathbf{Q}$ de densit\'e  $1/[E:\mathbf{Q}]$, 
qui sont toutes totalement 
d\'eploy\'ees dans $E$, et telles que, pour tout $w \in L$ au-dessus une place $l \in P$:
 \begin{enumerate}
 \item[i)] $\overline{\rho}_w(\Delta_w)$ est un sous-groupe $m$-big de  $GL_n(\mathbf{F}_l)$ .
 \item[ii)] $[ \ker \ad \overline{\rho}_w : \Delta_w \cap \ker \ad \overline{\rho}_w] > m$.
 \end{enumerate}
\end{theoreme}
\begin{remarque}
La premi\`ere partie de ce th\'eor\`eme est une g\'en\'eralisation  du r\'esultat principal de Snowden-Wiles~\cite{snowden_wiles}.  La d\'emonstration suit leurs arguments, en appliquant Proposition~\ref{proposition_stronger_version_higher_regular_elements_sw} au lieu de~\cite[Proposition 4.1]{snowden_wiles}.

La deuxi\`eme partie est un r\'esultat de Barnet-Lamb-Gee-Geraghty-Taylor qui est apparu \`a l'origine dans~\cite{taylor_et_al_modularity_results}.
\end{remarque}

Le plan de cet article est pareil \`a celui de~\cite{snowden_wiles}.
Section~\ref{section_elementary_properties_m_bigness} d\'emontre quelques propri\'et\'es de $m$-big qui \'etaient 
d\'emontr\'ees pour $1$-big dans~\cite[\S2]{snowden_wiles}.  Section~\ref{section_highly_regular_elements_semi_simple_groups} d\'emontre Proposition~\ref{proposition_stronger_version_higher_regular_elements_sw} qui am\'eliore l\'eg\`erement~\cite[Proposition 4.1]{snowden_wiles}.  Ce r\'esultat est appliqu\'e dans Section~\ref{section_m_big_algebraic_representations} 
pour d\'emontrer que l'image de certaines repr\'esentations alg\'ebriques est $m$-big (cf.
 Proposition~\ref{proposition_bigness_algebraic_representations} qui 
am\'eliore~\cite[Proposition 5.1]{snowden_wiles}).  Finalement, Section~\ref{section_mbig_nearly_hyperspecial_groups}
et Section~\ref{section_m_bigness_for_compatible_systems} appliquent ce r\'esultat pour d\'emontrer le th\'eor\`eme principal.

\end{document}